\documentclass[11pt]{article}

\usepackage{graphicx}%
\usepackage{amsmath,amssymb,amsthm,upref,bm}%
\usepackage{refcheck}

\textheight=208mm \textwidth=140mm \voffset=-10mm \hoffset=-10mm

\newtheorem{corollary}{Corollary}%
\newtheorem{theorem}{Theorem}%
\newtheorem{proposition}{Proposition}%
%
%


\begin{document}

\baselineskip=4.4mm

\makeatletter

\newcommand{\E}{\mathrm{e}\kern0.2pt} 
\newcommand{\D}{\mathrm{d}\kern0.2pt}
\newcommand{\RR}{\mathbb{R}}
\newcommand{\CC}{\mathbb{C}}%
\newcommand{\ii}{\kern0.05em\mathrm{i}\kern0.05em}

\renewcommand{\Re}{\mathrm{Re}} 
\renewcommand{\Im}{\mathrm{Im}}

\def\bottomfraction{0.9}

\title{\bf Weighted means of harmonic functions \\ and characterization of balls}

\author{Nikolay Kuznetsov}

\date{}

\maketitle

\vspace{-6mm}

\begin{center}
Laboratory for Mathematical Modelling of Wave Phenomena, \\ Institute for Problems
in Mechanical Engineering, Russian Academy of Sciences, \\ V.O., Bol'shoy pr. 61, St
Petersburg 199178, Russian Federation \\ E-mail: nikolay.g.kuznetsov@gmail.com
\end{center}

\begin{abstract}
\noindent Weighted mean value identities over balls are considered for harmonic
functions and their derivatives. Logarithmic and other weights are involved in these
identities for functions. Some applications of weighted identities are presented.
Also, new analytic characterizations of balls are proved; each of them requires the
volume mean of a single weight function over the domain under consideration to be
equal to a prescribed number depending on the weight.
\end{abstract}

\setcounter{equation}{0}


\section{Weighted means of harmonic functions}

{\bf 1.1. Background.} A function $u \in C^2 (D)$ is called harmonic (the origin of
this term is described in \cite{ABR}, p.~25), if it satisfies the equation $\nabla^2
u (x) = 0$ in a domain $D \subset \RR^m$, $m \geq 2$; $\nabla = (\partial_1, \dots ,
\partial_m)$ denotes the gradient operator, $\partial_i = \partial / \partial x_i$
and $x = (x_1, \dots, x_m)$ is a point of $\RR^m$. Studies of mean value properties
of harmonic functions date back to the Gauss theorem of the arithmetic mean over a
sphere; see \cite{G}, Article~20. Nowadays, its standard formulation is as follows.

\begin{theorem}
Let $u \in C^2 (D)$ be harmonic in a domain $D \subset \RR^m$, $m \geq 2$. Then for
every $x \in D$ the equality $M^\circ (x, r, u) = u (x)$ holds for each admissible
sphere $S_r (x)$.
\end{theorem}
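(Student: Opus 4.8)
The plan is to show that the spherical mean $M^\circ (x, r, u)$ does not depend on the radius $r$ and then to recover $u(x)$ by letting $r \to 0^+$; throughout, admissibility of $S_r (x)$ is understood to mean $\overline{B_r (x)} \subset D$, so that every integration below takes place inside $D$. First I would parametrise $S_r (x)$ by the unit sphere, writing each of its points as $y = x + r \xi$ with $|\xi| = 1$, so that
\[
M^\circ (x, r, u) = \frac{1}{\sigma_m} \int_{|\xi| = 1} u (x + r \xi) \, \D S (\xi) ,
\]
where $\sigma_m$ denotes the surface area of the unit sphere in $\RR^m$. Since $u \in C^2 (D)$, differentiation under the integral sign is legitimate, and the chain rule gives
\[
\frac{\partial}{\partial r} M^\circ (x, r, u) = \frac{1}{\sigma_m} \int_{|\xi| = 1} \nabla u (x + r \xi) \cdot \xi \, \D S (\xi) .
\]

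Next I would return to the sphere $S_r (x)$ itself. At a point $y \in S_r (x)$ the outward unit normal is $n = (y - x)/r = \xi$, while the surface elements are related by $\D S (y) = r^{m-1} \D S (\xi)$; hence the last integral equals
\[
\frac{1}{\sigma_m \, r^{m-1}} \int_{S_r (x)} \frac{\partial u}{\partial n} \, \D S (y) .
\]
The key step is to apply Green's first identity (equivalently, the divergence theorem) to this flux over the ball $B_r (x)$ bounded by $S_r (x)$:
\[
\int_{S_r (x)} \frac{\partial u}{\partial n} \, \D S = \int_{B_r (x)} \nabla^2 u \, \D y = 0 ,
\]
the right-hand side vanishing precisely because $u$ is harmonic. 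Consequently $\partial M^\circ / \partial r \equiv 0$, so $M^\circ (x, r, u)$ takes one and the same value for every admissible radius.

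Finally, the continuity of $u$ at $x$ gives $M^\circ (x, r, u) \to u (x)$ as $r \to 0^+$, which identifies the common constant value as $u (x)$ and completes the argument. The only genuine obstacle is the passage from $\partial_r M^\circ$ to a normal-derivative flux and the ensuing use of the divergence theorem; once harmonicity is invoked to annihilate the volume integral of $\nabla^2 u$, all that remains is bookkeeping about how surface measure scales under dilation.
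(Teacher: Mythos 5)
Your proof is correct: the standard argument of showing $\partial_r M^\circ(x,r,u)=0$ by rewriting the $r$-derivative as the normalized flux $\frac{1}{\sigma_m r^{m-1}}\int_{S_r(x)}\frac{\partial u}{\partial n}\,\D S$, annihilating it via the divergence theorem and harmonicity, and then recovering $u(x)$ from continuity as $r\to 0^+$ is complete (the set of admissible radii at a fixed $x$ is an interval, so the vanishing derivative does force constancy). Note, however, that the paper offers no proof of this statement at all --- it is quoted as the classical Gauss theorem of the arithmetic mean with a reference to Gauss's 1840 memoir --- so there is no in-paper argument to compare yours against; your write-up supplies the standard textbook proof that the paper takes for granted.
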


Here and below the following notation and terminology are used. The open ball of
radius $r$ centred at $x$ is $B_r (x) = \{ y : |y-x| < r \}$; it is called
admissible with respect to a domain $D$ provided $\overline{B_r (x)} \subset D$, and
$S_r (x) = \partial B_r (x)$ is the corresponding admissible sphere. If $u \in C^0
(D)$, then its spherical mean value over $S_r (x) \subset D$ is
\begin{equation}
M^\circ (x, r, u) = \frac{1}{|S_r|} \int_{S_r (x)} u (y) \, \D S_y =
\frac{1}{\omega_m} \int_{S_1 (0)} u (x +r y) \, \D S_y \, ; \label{sm}
\end{equation}
here $|S_r| = \omega_m r^{m-1}$ and $\omega_m = 2 \, \pi^{m/2} / \Gamma (m/2)$ is
the total area of the unit sphere (as usual $\Gamma$ stands for the Gamma function),
and $\D S$ is the surface area measure.

Integrating $M^\circ (x, r, u)$ with respect to $r$, one obtains the following mean
value property over balls.

\begin{theorem}
Let $u \in C^2 (D)$ be harmonic in a domain $D \subset \RR^m$, $m \geq 2$. Then for
every $x \in D$ the equality $M^\bullet (x, r, u) = u (x)$ holds for each admissible
ball $B_r (x)$.
\end{theorem}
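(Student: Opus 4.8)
The plan is to reduce the statement to Theorem~1 by slicing the ball into concentric spherical shells. Writing the volume mean explicitly as
\[
M^\bullet (x, r, u) = \frac{1}{|B_r|} \int_{B_r (x)} u (y) \, \D y, \qquad |B_r| = \frac{\omega_m}{m} \, r^m ,
\]
I would first pass to polar coordinates centred at $x$, which expresses the integral over the ball as an iterated integral over the spheres $S_\rho (x)$, $0 < \rho \leq r$:
\[
\int_{B_r (x)} u (y) \, \D y = \int_0^r \Bigl[ \int_{S_\rho (x)} u (y) \, \D S_y \Bigr] \D \rho = \int_0^r |S_\rho| \, M^\circ (x, \rho, u) \, \D \rho ,
\]
where the inner integral has been rewritten through the definition \eqref{sm} of the spherical mean together with $|S_\rho| = \omega_m \rho^{m-1}$. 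Fubini's theorem applies because $u$ is continuous and hence bounded on the compact set $\overline{B_r (x)}$.

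The key step is then to invoke Theorem~1. Since $B_r (x)$ is admissible, $\overline{B_r (x)} \subset D$, so every sphere $S_\rho (x)$ with $0 < \rho \leq r$ is admissible as well; hence $M^\circ (x, \rho, u) = u (x)$ for each such $\rho$. Substituting this constant value, the integral collapses:
\[
\int_{B_r (x)} u (y) \, \D y = u (x) \int_0^r \omega_m \rho^{m-1} \, \D \rho = u (x) \, \frac{\omega_m}{m} \, r^m = u (x) \, |B_r| .
\]
Dividing by $|B_r|$ yields $M^\bullet (x, r, u) = u (x)$, as required.

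There is essentially no serious obstacle here: the argument is a single application of integration in polar coordinates combined with Theorem~1, and the continuity of $u$ makes all the integrals well defined. The only point deserving a little care is the admissibility bookkeeping, namely checking that the hypothesis $\overline{B_r (x)} \subset D$ indeed renders every intermediate sphere $S_\rho (x)$ admissible, so that Theorem~1 may be applied uniformly in $\rho$ on the whole interval $(0, r]$.
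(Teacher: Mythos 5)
Your argument is correct and is exactly the route the paper indicates: the paper derives Theorem~2 by ``integrating $M^\circ(x,r,u)$ with respect to $r$,'' which is precisely your decomposition of the ball into concentric spheres followed by an application of Theorem~1 on each sphere. Nothing further is needed.
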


Here, the volume mean of a locally Lebesgue integrable function $u$ is defined
similarly to \eqref{sm}:
\begin{equation*}
M^\bullet (x, r, u) = \frac{1}{|B_r|} \int_{B_r (x)} u (y) \, \D y =
\frac{m}{\omega_m r^m} \int_{|y| < r} u (x + y) \, \D y \, . \label{bm}
\end{equation*}

In their extensive article \cite{NV}, Netuka and Vesel\'y reviewed many other
assertions involving various mean value properties of harmonic functions. The survey
\cite{Ku2} published several years ago substantially complemented \cite{NV} with old
and new results not covered in \cite{NV}. However, to the best author's knowledge,
no results concerning weighted means of harmonic functions have appeared so far. The
aim of the present note is to fill in this gap at least partially.

\vskip7pt {\bf 1.2. Weighted means.} In the recent preprint \cite{Ku4}, the author
derived heuristically the two-dimensional version of the following.

\begin{theorem}
Let $u \in C^2 (D)$ be harmonic in a domain $D \subset \RR^m$, $m \geq 2$. Then
\begin{equation}
u (x) = \frac{m}{|B_r|} \int_{B_r (x)} u (y) \log \frac{r}{|x-y|} \, \D y 
\label{har}
\end{equation}
for every $x \in D$ and each admissible ball $B_r (x)$.
\end{theorem}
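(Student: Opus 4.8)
The plan is to reduce the weighted volume integral on the right of \eqref{har} to the unweighted spherical means already covered by Theorem 1, by integrating over spherical shells. First I would decompose the ball into such shells: since the weight $\log (r/|x-y|)$ depends on $y$ only through the radius $|x - y| = s$, it factors out of each spherical integral, and the coarea formula (polar coordinates centred at $x$) gives
\[
\int_{B_r (x)} u (y) \log \frac{r}{|x-y|} \, \D y = \int_0^r \left( \int_{S_s (x)} u (y) \, \D S_y \right) \log \frac{r}{s} \, \D s \, .
\]

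The key step is then to recognise the inner integral as the Gauss spherical mean. By Theorem 1 (equivalently, by \eqref{sm}), $\int_{S_s (x)} u (y) \, \D S_y = \omega_m s^{m-1} u (x)$ for every admissible sphere $S_s (x)$, so the right-hand side collapses to $\omega_m u (x) \int_0^r s^{m-1} \log (r/s) \, \D s$. It remains only to evaluate this elementary radial integral: the substitution $s = r t$ turns it into $r^m \int_0^1 t^{m-1} (- \log t) \, \D t = r^m / m^2$, a standard integral obtained by a single integration by parts (or by differentiating $\int_0^1 t^{a-1} \, \D t = 1/a$ with respect to the parameter $a$ and setting $a = m$).

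Finally I would multiply by the prefactor $m / |B_r| = m^2 / (\omega_m r^m)$, using $|B_r| = \omega_m r^m / m$. Combined with the value $\omega_m u (x) \, r^m / m^2$ just obtained, every factor cancels except $u (x)$, which is exactly \eqref{har}. I do not expect a genuine obstacle in this argument, as it is a direct shell integration against Theorem 1. The only point deserving a word of care is integrability near the centre, where the weight $\log (r/s)$ is singular: since $u$ is continuous, hence bounded on $\overline{B_r (x)}$, and $s^{m-1} \log (r/s)$ is integrable on $(0, r)$ for every $m \geq 2$, the integral in \eqref{har} converges absolutely and the passage to polar coordinates is justified.
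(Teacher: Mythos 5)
Your argument is correct and is essentially the paper's own proof: both reduce the weighted volume integral to the Gauss spherical mean of Theorem 1 via polar coordinates centred at $x$ and then evaluate the elementary radial integral $\int_0^r s^{m-1}\log(r/s)\,\D s = r^m/m^2$. The only (cosmetic) difference is that the paper first splits $\log(r/|x-y|)$ as $\log r - \log|x-y|$ and handles the constant part by Theorem 2, whereas you keep the weight intact throughout.
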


\begin{proof}
Theorem 2 implies that the right-hand side of \eqref{har} is equal to
\begin{equation}
u (x) \, m \log r - \frac{m}{|B_r|} \int_{B_r (x)} u (y) \log |x-y| \, \D y \, .
\label{har'}
\end{equation}
In the polar coordinates $(\rho, \theta)$ centred at $x$, we have
\begin{eqnarray*}
&& \!\!\!\!\!\!\!\!\!\! \int_{B_r (x)} \!\! u (y) \log |x-y| \, \D y = \! \int_0^r
\! \int_{S_1 (0)} \!\! u (\rho , \theta) \, \rho^{m-1} \log \rho \, \D S \, \D \rho
\\ && \ \ \ \ \ \ \ \ \ \ \ \ \ \ \ \ \ \ \ \ \ \ \ \ \ \ = |S_1| \, u (x)
\int_0^r \!\! \rho^{m-1} \log \rho \, \D \rho \, ,
\end{eqnarray*}
where the last equality is a consequence of Theorem 1. Since
\[ \int_0^r \!\! \rho^{m-1} \log \rho \, \D \rho = r^m ( m \log r - 1 ) / m^2 \, , 
\]
the expression \eqref{har'} is equal to $u (x)$, which completes the proof.
\end{proof}

The two-dimensional version of Theorem 3 formulated in \cite{Ku4} was proved by
Osch\-mann \cite{O}.

\begin{corollary}
Let $D$ be a domain in $\RR^m$. If $u$ is harmonic in $D$, then
\begin{equation*}
\partial_i u (x) = \frac{m}{|B_r|} \int_{B_r (x)} u (y) \, \frac{y_i -
x_i}{|x-y|^2} \, \D y \, , \quad i = 1,2 ,
\end{equation*}
for every $x \in D$ and each admissible ball $B_r (x)$.
\end{corollary}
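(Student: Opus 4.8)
The plan is to obtain the asserted identity by differentiating the logarithmic mean value formula \eqref{har} of Theorem 3 with respect to $x_i$. The only obstacle to a direct differentiation is that in \eqref{har} the point $x$ enters both the integrand and the domain of integration. I would remove this double dependence by the substitution $y = x + z$, which turns \eqref{har} into
\begin{equation*}
u (x) = \frac{m}{|B_r|} \int_{|z| < r} u (x + z) \, \log \frac{r}{|z|} \, \D z \, ,
\end{equation*}
so that now neither the ball $\{ |z| < r \}$ nor the weight $\log (r / |z|)$ depends on $x$.

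Next I would differentiate under the integral sign. This is legitimate because a harmonic function is infinitely smooth, so $\partial_i u (x + z)$ is continuous, and the local boundedness of $\partial_i u$ together with the integrability of $\log (r / |z|)$ over $\{ |z| < r \}$ supplies an integrable majorant. Using $\partial_{x_i} [u (x + z)] = \partial_{z_i} [u (x + z)]$, one gets
\begin{equation*}
\partial_i u (x) = \frac{m}{|B_r|} \int_{|z| < r} \partial_{z_i} [u (x + z)] \, \log \frac{r}{|z|} \, \D z \, .
\end{equation*}

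The key step is an integration by parts in the $z_i$-variable. Since $\log (r / |z|)$ vanishes on the sphere $|z| = r$, the boundary term over $S_r (0)$ disappears, and using $\partial_{z_i} \log (r / |z|) = - z_i / |z|^2$ one is left with
\begin{equation*}
\partial_i u (x) = - \frac{m}{|B_r|} \int_{|z| < r} u (x + z) \, \partial_{z_i} \log \frac{r}{|z|} \, \D z = \frac{m}{|B_r|} \int_{|z| < r} u (x + z) \, \frac{z_i}{|z|^2} \, \D z \, .
\end{equation*}
Returning to the variable $y = x + z$, so that $z_i = y_i - x_i$ and $|z|^2 = |x - y|^2$, yields exactly the claimed formula.

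I expect the main obstacle to be the rigorous justification of the integration by parts at the singularity $z = 0$, where both $\log (r / |z|)$ and $z_i / |z|^2$ blow up. I would handle this by performing the integration by parts over the annulus $\varepsilon < |z| < r$ and then letting $\varepsilon \to 0$. The contribution of the inner sphere $|z| = \varepsilon$ is bounded by a constant times $\varepsilon^{m-1} \log (r / \varepsilon)$, which tends to $0$ for $m \geq 2$; hence the limit reproduces the displayed expression and completes the proof.
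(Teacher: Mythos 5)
Your proof is correct and follows essentially the same route as the paper: both reduce the claim to the logarithmic identity \eqref{har} applied to the derivative of $u$ (the paper simply invokes Theorem 3 for the harmonic function $\partial_i u$, whereas you differentiate \eqref{har} under the integral sign after the shift $y = x + z$ --- the two give the same intermediate formula) and then perform the same integration by parts, with the boundary term dropping out because $\log (r/|x-y|)$ vanishes on $S_r (x)$. Your $\varepsilon$-annulus treatment of the singularity at the centre is a welcome extra precision that the paper's one-line integration by parts leaves implicit.
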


\begin{proof}
Since $\partial_i u$ is also harmonic in $D$, Theorem 3 implies that
\begin{equation*}
\partial_i u (x) = \frac{m}{|B_r|} \int_{B_r (x)} \frac{\partial u}{\partial y_i}
\log \frac{r}{|x-y|} \, \D y \, , \quad i = 1,2 
\end{equation*}
for every $x \in D$, where $B_r (x)$ is an arbitrary admissible ball. Integrating by
parts on the right-hand side, one arrives at the required assertion because the
integral over $S_r (x)$ vanishes. Indeed, $\log \frac{r}{|x-y|} = 0$ when $y \in S_r
(x)$.
\end{proof}

Corollary 1 makes obvious the following.

\begin{proposition}
Let $u$ be harmonic in $D$. If $D'$ is a compact subset of $D$, then
\[ \max_{x \in D'} |\partial_i u| \leq \frac{m}{d} \sup_{x \in D} |u| \quad for 
\ i=1,\dots,m ,
\]
where $d$ is the distance between $D'$ and $\partial D$.

Moreover, if $u \geq 0$ in $D$, then
\[ |\partial_i u (x_0)| \leq \frac{m}{d_0} u (x_0) \quad for \ i=1,\dots,m ,
\]
where $d_0$ is the distance from $x_0$ to $\partial D$.
\end{proposition}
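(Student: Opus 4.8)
The plan is to work from a boundary representation of $\partial_i u$ and estimate it directly, taking the radius of the ball up to the relevant distance. Since $u$ is harmonic, each $\partial_i u$ is harmonic as well, so Theorem 2 applied to $\partial_i u$ gives
\[ \partial_i u(x) = \frac{m}{\omega_m r^m}\int_{B_r(x)}\partial_{y_i}u(y)\,\D y \]
for every admissible ball $B_r(x)$. Writing $\partial_{y_i}u = \mathrm{div}(u\,e_i)$ and applying the divergence theorem (legitimate because $\overline{B_r(x)}\subset D$ and $u$ is smooth there), I would turn this into the surface identity
\[ \partial_i u(x) = \frac{m}{\omega_m r^m}\int_{S_r(x)}u(y)\,\nu_i(y)\,\D S_y, \qquad \nu_i(y) = \frac{y_i-x_i}{r}, \]
which is the boundary companion of the volume formula in Corollary 1.

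For the first inequality I would simply use $|\nu_i|\le 1$ on $S_r(x)$ together with $|S_r| = \omega_m r^{m-1}$, obtaining
\[ |\partial_i u(x)| \le \frac{m}{\omega_m r^m}\,\sup_D|u|\int_{S_r(x)}|\nu_i(y)|\,\D S_y \le \frac{m}{\omega_m r^m}\,\omega_m r^{m-1}\sup_D|u| = \frac{m}{r}\sup_D|u|. \]
For $x\in D'$ every radius $r<d$ gives an admissible ball, so letting $r\to d$ yields $|\partial_i u(x)|\le (m/d)\sup_D|u|$; since $D'$ is compact and $\partial_i u$ is continuous, taking the maximum over $D'$ completes this part.

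The second inequality is where the argument stops being purely mechanical. When $u\ge 0$ I would refrain from taking the absolute value of $u$, and again bound $|\nu_i|\le 1$, to get
\[ |\partial_i u(x_0)| \le \frac{m}{\omega_m r^m}\int_{S_r(x_0)}u(y)\,|\nu_i(y)|\,\D S_y \le \frac{m}{\omega_m r^m}\int_{S_r(x_0)}u(y)\,\D S_y. \]
The key point is that the remaining integral equals $|S_r|\,M^\circ(x_0,r,u)$, which by the Gauss mean value theorem (Theorem 1) is exactly $\omega_m r^{m-1}u(x_0)$. Hence $|\partial_i u(x_0)|\le (m/r)u(x_0)$, and letting $r\to d_0$ gives the claimed bound.

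I expect the only genuine subtlety to be the clean constant $m$. Estimating the volume identity of Corollary 1 directly, by bounding $|y_i-x_i|/|x-y|^2$ by $1/|x-y|$, produces the constant $m^2/(m-1)$, which is strictly larger than $m$ (for instance $4$ against $2$ when $m=2$); the optimal $m$ appears only in the boundary form, where the kernel $\nu_i$ is pointwise bounded by $1$ on $S_r(x)$. For this reason I would route both estimates through the surface representation rather than the volume one. Everything else — admissibility of $B_r(x)$ for $r<d$ and the limit $r\to d$ — is routine once the distance hypotheses are used to guarantee $\overline{B_r(x)}\subset D$.
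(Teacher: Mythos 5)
Your proof is correct, but it takes a genuinely different route from the paper's. The paper offers no written argument beyond the remark that ``Corollary 1 makes obvious the following,'' i.e.\ it intends the estimates to be read off from the volume identity $\partial_i u(x) = \frac{m}{|B_r|}\int_{B_r(x)} u(y)\,\frac{y_i-x_i}{|x-y|^2}\,\D y$. You bypass Corollary 1 entirely: you apply the unweighted volume mean (Theorem 2) to the harmonic function $\partial_i u$, convert it by the divergence theorem into the surface representation $\partial_i u(x)=\frac{m}{\omega_m r^m}\int_{S_r(x)}u(y)\,\nu_i(y)\,\D S_y$ with $|\nu_i|\le 1$, and finish with Gauss's theorem (Theorem 1) in the nonnegative case. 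Your closing remark is the substantive point of the comparison: bounding Corollary 1's kernel by $|y_i-x_i|/|x-y|^2\le 1/|x-y|$ and integrating gives the constant $m^2/\bigl((m-1)\,d\bigr)$, which strictly exceeds $m/d$ for every $m\ge 2$; even the exact evaluation of $\int_{B_r(x)}|y_i-x_i|\,|x-y|^{-2}\,\D y$ falls short of $m/d$, because putting the absolute value inside discards the cancellation in the odd kernel. So the paper's suggested derivation, taken literally, does not deliver the constant $m/d$ as stated, whereas your surface route --- the classical interior gradient estimate for harmonic functions --- does. Both arguments are elementary and of comparable length; yours buys the stated (sharper) constant at the cost of not passing through the weighted identities that are the paper's theme, and it would be worth flagging to the author that the Proposition's constant is not actually ``obvious'' from Corollary 1.
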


Along with $\log \frac{r}{|x-y|}$, there are other weights with mean value
properties analogous to \eqref{har}; a couple of them is adduced in the following.

\begin{theorem}
Let $u \in C^2 (D)$ be harmonic in a domain $D \subset \RR^m$, $m \geq 2$. Then
\begin{equation}
(m \, \alpha^{-1} - 1) \, r^{\alpha - m} \, u (x) = \frac{1}{|B_r|} \int_{B_r (x)} u
(y) \left[ |x-y|^{\alpha - m} - r^{\alpha - m} \right] \D y \, , \ \ \alpha \in (0,
m) \, , \label{11a}
\end{equation}
and
\begin{equation}
\left[ 1 - \frac{m}{m + \beta} \right] r^\beta \, u (x) = \frac{1}{|B_r|} \int_{B_r
(x)} u (y) \left[ r^\beta - |x - y|^\beta \right] \D y \, , \quad \beta > 0 \, ,
\label{11b}
\end{equation}
for every $x \in D$ and each admissible ball $B_r (x)$. Here both weights are
integrable functions of $y$ over any bounded domain for any $x \in \RR^m$.
\end{theorem}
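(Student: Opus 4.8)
The plan is to follow the device used in the proof of Theorem 3: in each identity I split the bracketed weight, so that the unweighted summand is evaluated by the volume mean value property (Theorem 2), while the summand carrying the genuine weight is reduced to a radial integral by passing to polar coordinates $(\rho, \theta)$ centred at $x$ and invoking the spherical mean value property. Throughout I use $|B_r| = \omega_m r^m / m$ and $|S_1| = \omega_m$, together with the consequence of Theorem 1 and \eqref{sm} that $\int_{S_1 (0)} u (x + \rho \theta) \, \D S = \omega_m \, u (x)$ for harmonic $u$.

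For \eqref{11a} I would first write the right-hand side as
\[ \frac{1}{|B_r|} \int_{B_r (x)} u (y) \, |x-y|^{\alpha - m} \, \D y - r^{\alpha - m} \, M^\bullet (x, r, u) \, , \]
where Theorem 2 gives $M^\bullet (x, r, u) = u (x)$. In the first integral polar coordinates turn the angular part into the spherical mean, so by Theorem 1 it becomes $\omega_m \, u (x) \int_0^r \rho^{\alpha - 1} \, \D \rho = \omega_m \, u (x) \, r^\alpha / \alpha$. Dividing by $|B_r|$ yields $(m / \alpha) \, r^{\alpha - m} u (x)$, and subtracting the second term produces exactly $(m \alpha^{-1} - 1) \, r^{\alpha - m} u (x)$, the left-hand side.

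Identity \eqref{11b} is handled in the same way. Writing the right-hand side as $r^\beta M^\bullet (x, r, u) - |B_r|^{-1} \int_{B_r (x)} u (y) \, |x-y|^\beta \, \D y$, the first term equals $r^\beta u (x)$ by Theorem 2, while in polar coordinates the second integral reduces to $\omega_m \, u (x) \int_0^r \rho^{m + \beta - 1} \, \D \rho = \omega_m \, u (x) \, r^{m+\beta} / (m+\beta)$. After division by $|B_r|$ this is $[m / (m+\beta)] \, r^\beta u (x)$, and the difference gives $[1 - m/(m+\beta)] \, r^\beta u (x)$, as required.

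The only point requiring care is the integrability of the weight $|x-y|^{\alpha - m}$ near its singularity at $y = x$, which is needed both to ensure that the right-hand side of \eqref{11a} is finite and to justify the interchange of radial and angular integration. Since $\alpha \in (0, m)$ gives the radial density $\rho^{\alpha - m} \rho^{m-1} = \rho^{\alpha - 1}$ the exponent $\alpha - 1 > -1$, the integral $\int_0^r \rho^{\alpha - 1} \, \D \rho$ converges; the weight $|x-y|^\beta$ with $\beta > 0$ is bounded on bounded domains, so its integrability is immediate. This is the substance of the final sentence of the statement, and it is the main (though mild) obstacle in an argument that otherwise mirrors Theorem 3 verbatim.
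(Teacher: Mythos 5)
Your proof is correct and follows exactly the route the paper intends: the paper states that the proof ``literally repeats that of Theorem 3,'' and your splitting of each weight, use of Theorem 2 on the constant part, and reduction of the singular part to the radial integrals $\int_0^r \rho^{\alpha-1}\,\D\rho$ and $\int_0^r \rho^{m+\beta-1}\,\D\rho$ via Theorem 1 in polar coordinates is precisely that argument, with the arithmetic checking out. The integrability remark at the end is also handled appropriately.
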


The proof literally repeats that of Theorem 3. Notice that the coefficients at $u$
are positive on the left-hand side of \eqref{11a} and \eqref{11b} in view of the
assumptions about $\alpha$ and $\beta$, respectively.

\section{Characterization of balls via averaging weights}

In 1962, Epstein published the one-page long note \cite{E}, in which he proved the
following theorem.

\begin{quote}
{\it Let $D$ be a simply connected plane domain of finite area and $t$ a point of
$D$ such that, for every function $u$ harmonic in $D$ and integrable over $D$, the
mean value of $u$ over the area of $D$ equals $u (t)$. Then $D$ is a disc and $t$ is
its center.}
\end{quote}

\noindent Further studies of inverse mean value properties (this widely accepted
term, coined by Hansen and Netuka \cite{HN1}, concerns analytic characterizations
 of various domains via their volume or/and boundary area means) for almost six decades
were restricted to those of {\it harmonic functions}; see Sections~7 and 8 of the
extensive survey~\cite{NV}. Recently, inverse mean value properties were obtained
for real-valued solutions of the modified Helmholtz and Helmholtz equations; see the
notes \cite{Ku1} and \cite{Ku3}, respectively.

However, it occurs that one can characterize balls without involving any solutions
of partial differential equations. Establishing some characterizations of this kind,
which are based on weighted means of domains' volume, is the second aim of the
present note.

The motivation behind this is the observation that the weighted mean value formula
\eqref{har} takes the form
\begin{equation}
m^{-1} = \frac{1}{|B_r|} \int_{B_r (x)} \log \frac{r}{|x-y|} \, \D y
\label{MW'}
\end{equation}
for $u$ equal to a nonzero constant. Another essential point is that the
spherically symmetric weight $\log \, (r / |x - y|)$ has the following properties:
it is a continuous function of $y$ going along any ray emanating from $x$; moreover,
this function decreases monotonically from $+ \infty$ to $- \infty$ and it is
positive when $y$ belongs to $B_r (x)$, being negative outside this ball. This
particular behaviour of the weight is used in the following.

\begin{theorem}
Let $D \subset \RR^m$ be a bounded domain. If the identity
\begin{equation}
\frac{1}{m} = \frac{1}{|D|} \int_D \log \frac{r}{|x - y|} \, \D y \label{MW''}
\end{equation}
holds with $x \in D$ and $r > 0$ such that $|D| \geq |B_r|$, then $D = B_r (x)$.
\end{theorem}

\begin{proof}
Without loss of generality, we suppose that the domain $D$ is located so that $x$
coincides with the origin. Let us show that the assumption $D \neq B_r (0)$ leads to
a contradiction. For this purpose we consider two bounded open sets $G_i = D
\setminus \overline{B_r (0)}$ (nonempty by the assumption about $D$ and $r$) and
$G_e = B_r (0) \setminus \overline D$ (possibly empty).

Let us write \eqref{MW''} as follows:
\begin{equation}
\frac{|D|}{m} = \int_D \log \frac{r}{|y|} \, \D y \, , \label{1}
\end{equation}
Since identity \eqref{MW'} holds for $x = 0$ and $B_r (0)$, we write it in the same
way:
\begin{equation}
\frac{|B_r|}{m} = \int_{B_r (0)} \log \frac{r}{|y|} \, \D y \, . \label{2}
\end{equation}
Subtracting \eqref{2} from \eqref{1}, we obtain
\begin{equation*}
\frac{|D| - |B_r|}{m} = \int_{G_i} \log \frac{r}{|y|} \, \D y - \int_{G_e} \log
\frac{r}{|y|} \, \D y \, .
\end{equation*}
Here the difference on the right-hand side is negative. Indeed, $\log \, (r / |y|) <
0$ on $G_i \neq \emptyset$, because $|y| > r$ there. Hence, the first term is
negative. If $G_e \neq \emptyset$, then the second integral is positive because
$\log \, (r / |y|) > 0$ on $G_e$, where $|y| < r$. On the other hand, the expression
on the left-hand side is nonnegative. The obtained contradiction proves the theorem.
\end{proof}

An analogue of Theorem 5 ensues by averaging the weight
\begin{equation}
|x-y|^{\alpha - m} - r^{\alpha - m} , \quad \mbox{where} \ r > 0 \ \mbox{and} \
\alpha \in (0, m) , \label{Riesz}
\end{equation}
used in the mean value formula \eqref{11a}, which takes the form
\begin{equation}
\frac{1}{|B_r|} \int_{B_r (x)} \left[ |x-y|^{\alpha - m} - r^{\alpha - m} \right] \D
y = (m \, \alpha^{-1} - 1) \, r^{\alpha - m} > 0 \ \ \mbox{for any} \ x \in \RR^m 
\label{26a}
\end{equation}
when $u$ is a nonzero constant. This identity is similar to \eqref{MW'} and allows
us to prove the following.

\begin{theorem}
Let $D \subset \RR^m$ be a bounded domain. If the identity
\begin{equation}
(m \, \alpha^{-1} - 1) \, r^{\alpha - m} = \frac{1}{|D|} \int_D \left[
|x-y|^{\alpha - m} - r^{\alpha - m} \right] \D y \label{26b}
\end{equation}
holds with $x \in D$ and $r > 0$ such that $|D| \geq |B_r|$, then $D = B_r (x)$.
\end{theorem}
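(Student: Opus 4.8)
The plan is to follow the proof of Theorem 5 almost verbatim, exploiting the sign structure of the weight \eqref{Riesz} exactly as the logarithmic weight was used. As before, I would first place $x$ at the origin without loss of generality and suppose for contradiction that $D \neq B_r (0)$. I would then introduce the two auxiliary open sets $G_i = D \setminus \overline{B_r (0)}$ and $G_e = B_r (0) \setminus \overline{D}$; the assumption $|D| \geq |B_r|$ together with $D \neq B_r(0)$ guarantees that $G_i$ is nonempty, while $G_e$ may be empty.

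Next I would write the hypothesis \eqref{26b} (with $x = 0$) and the constant-function identity \eqref{26a} (also with $x = 0$, over $B_r(0)$) in the parallel forms
\begin{equation*}
\frac{|D|}{|B_r|}\,(m\,\alpha^{-1}-1)\,r^{\alpha-m} = \int_D \left[ |y|^{\alpha-m} - r^{\alpha-m} \right] \frac{\D y}{|B_r|}\,,
\end{equation*}
\begin{equation*}
(m\,\alpha^{-1}-1)\,r^{\alpha-m} = \int_{B_r(0)} \left[ |y|^{\alpha-m} - r^{\alpha-m} \right] \frac{\D y}{|B_r|}\,.
\end{equation*}
Subtracting the second from the first and clearing the common factor $|B_r|^{-1}$, the domain of integration on the right splits as $\int_D - \int_{B_r(0)} = \int_{G_i} - \int_{G_e}$, yielding
\begin{equation*}
\frac{|D|-|B_r|}{|B_r|}\,(m\,\alpha^{-1}-1)\,r^{\alpha-m} = \int_{G_i} \left[ |y|^{\alpha-m} - r^{\alpha-m} \right] \D y - \int_{G_e} \left[ |y|^{\alpha-m} - r^{\alpha-m} \right] \D y\,,
\end{equation*}
up to the overall positive factor $|B_r|^{-1}$.

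The decisive observation is the sign of the weight $|y|^{\alpha-m} - r^{\alpha-m}$. Since $\alpha - m < 0$, the map $\rho \mapsto \rho^{\alpha-m}$ is strictly decreasing on $(0,\infty)$, so this weight is positive precisely when $|y| < r$ and negative when $|y| > r$ — exactly the monotone sign behaviour that made the logarithmic argument work. On $G_i$ we have $|y| > r$, so the integrand is negative and the first integral is negative; on $G_e$ (when nonempty) we have $|y| < r$, so the integrand is positive and the subtracted integral contributes negatively as well. Hence the whole right-hand side is strictly negative. On the other hand, the left-hand side is nonnegative, because $|D| - |B_r| \geq 0$ by hypothesis and the coefficient $(m\,\alpha^{-1}-1)\,r^{\alpha-m}$ is positive for $\alpha \in (0,m)$. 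This contradiction forces $D = B_r(0)$, i.e. $D = B_r(x)$ after translating back.

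There is no real obstacle here: the entire argument is structurally identical to that of Theorem 5, and the only point requiring care is confirming the sign of the Riesz-type weight, which follows immediately from the monotonicity of $\rho^{\alpha-m}$ under the constraint $\alpha \in (0,m)$. The integrability of the weight over any bounded domain, already noted in Theorem 4, ensures that all integrals above are finite, so the subtraction and set decomposition are legitimate.
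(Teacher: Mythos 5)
Your argument is essentially identical to the paper's own proof: the author likewise reduces Theorem 6 to the scheme of the logarithmic case, splits $\int_D - \int_{B_r(0)}$ into $\int_{G_i} - \int_{G_e}$, and derives the same sign contradiction from $|x-y|^{\alpha-m} \lessgtr r^{\alpha-m}$ according as $|y| \gtrless r$, with the left-hand side nonnegative because $|D| \geq |B_r|$ and $(m\,\alpha^{-1}-1)\,r^{\alpha-m} > 0$. Your write-up is correct and, if anything, slightly more explicit about the nonemptiness of $G_i$ and the integrability of the weight than the paper's sketch.
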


The proof literally follows that of Theorem 4, but identities \eqref{26a} and
\eqref{26b} must be used instead of \eqref{MW'} and~\eqref{MW''}, respectively.
Therefore, assuming that $D \neq B_r (x)$ for $x \in D$, one arrives at the equality
\[ (|D| - |B_r|) \, (m \, \alpha^{-1} - 1) \, r^{\alpha - m} = \left[ \int_{G_i} -
\int_{G_e} \right] \left[ |x-y|^{\alpha - m} - r^{\alpha - m} \right] \D y \, ,
\]
which is impossible. Indeed, the expression on the left-hand side is nonnegative,
whereas the integral over $G_i \neq \emptyset$ is negative because $|x-y|^{\alpha -
m} < r^{\alpha - m}$ on $G_i$ and the integral over $G_e$ (possibly empty) is
positive, since $|x-y|^{\alpha - m} > r^{\alpha - m}$ on $G_e$ when it is nonempty.

It is worth mentioning the result obtained by O'Hara \cite{H}, which seems to be
related to Theorem~6. It is known that the first term in \eqref{Riesz} integrated
over $D \times D$ defines the so-called Riesz $(\alpha - m)$-energy of a domain $D$.
Its generalization proposed in \cite{H} serves for characterization of balls; see
Theorem~3.4 of that paper. However, its proof is rather technical.

Finally, the nonsingular weight function
\[ r^\beta - |x - y|^\beta , \ \ \mbox{where} \ \beta > 0 \, ,
\]
used in the mean value formula \eqref{11b}, which takes the form
\[ \frac{1}{|B_r|} \int_{B_r (x)} \left[ r^\beta - |x - y|^\beta \right] \D y = 
\left[ 1 - \frac{m}{m + \beta} \right] r^\beta > 0 \ \ \mbox{when} \ r > 0 
\]
and $u$ equals to a nonzero constant, allows us to prove the assertion analogous to
Theorems~5 and~6, but using the latter weight. Thus, one arrives at another
characterization of balls via averaging a weight over the domain's volume.

It is easy to continue the list of weights (singular and nonsingular) that
characterize balls via averaging. Indeed, one just has to take into account that
such a weight is any continuous (of course, this requirement can be relaxed)
function
\[ (0, \infty) \times (0, \infty) \ni (t, r) \mapsto w \in \RR
\]
with the following properties:

(i) $w (t, r) > 0$ when $t < r$, $w (t, r) < 0$ when $t > r$ and $w (r, r) = 0$;

(ii) for every $x \in \RR^m$ and every $r > 0$, the superposition $w (|x-y|, r)$ is
a locally integrable function of $y \in \RR^m$.

It is interesting to find out whether every weight characterizing balls via
averaging yields a mean value identity for harmonic functions. Also, the converse of
this assertion is worth considering.

{\small

}


\begin{thebibliography}{99}

\bibitem{ABR} S. Axler, P. Bourdon, W. Ramey, \textit{Harmonic Function Theory}, 2nd
ed., Springer-Verlag, New York, 2001.

\bibitem{G} C.~F. Gauss, \textit{Allgemeine Lehrs\"atze in Beziehung auf die im
verkehrten Verh\"altnisse des Quadrats der Entfernung wirkenden Anziehungs- und
Abstoßungskr\"afte}, Wiedmannschen Buchhandlung, Leipzig, 1840.

\bibitem{NV} I. Netuka, J. Vesel\'y, ``Mean value property and harmonic functions,''
\textit{Classical and Modern Potential Theory and Applications}, Kluwer, Dordrecht,
pp. 359--398 (1994).

\bibitem{Ku2} N. Kuznetsov, ``Mean value properties of harmonic functions and
related topics (a sur\-vey),'' \textit{J. Math. Sci.} {\bf 242}, 177--199 (2019).

\bibitem{Ku4} N. Kuznetsov, ``Characterizations of discs via weighted means,''
Preprint arXiv: 2209.10281v1 (2022).

\bibitem{O} F. Oschmann, ``A short comment on `Characterizations of discs via
weighted means' by Nikolay Kuznetsov,'' Preprint arXiv:2209.110741v1 (2022).

\bibitem{E} B. Epstein, ``On the mean-value property of harmonic functions,''
\textit{Proc. Amer. Math. Soc.} {\bf 13}, 830 (1962).

\bibitem{HN1} W. Hansen, I. Netuka, ``Inverse mean value property of harmonic
functions,'' \textit{Math. Ann.} {\bf 297}, 147--156 (1993). ``Corrigendum,''
\textit{Math. Ann.} {\bf 303}, 373--375 (1995).

\bibitem{Ku1} N. Kuznetsov, ``Characterization of balls via solutions of the
modified Helmholtz equation,'' \textit{Comptes Rendus Math.} {\bf 359}, 945--948
(2021).

\bibitem{Ku3} N. Kuznetsov, ``On characterization of balls via solutions to the
Helmholtz equation,'' \textit{J. Math. Sci.} {\bf 264}, 603--608 (2022).

\bibitem{H} J. O'Hara, ``Characterizations of balls by generalized Riesz energy,''
\textit{Math. Nachr.} {\bf 292}, 159--169 (2019).

\end{thebibliography}
\end{document}